\newcommand{\A}{\mathbb A}
\newcommand{\Z}{\mathbb Z}
\newcommand{\cO}{\mathcal O}
\newcommand{\cF}{\mathcal F}
\newcommand{\cG}{\mathcal G}
\newcommand{\bG}{\overline{G}}
\newcommand{\baf}{\bar f}
\newcommand{\bh}{\bar{h}}
\newcommand{\bp}{\bar{p}}
\newcommand{\m}{\mathfrak m}
\DeclareMathOperator{\lcm}{lcm} 
\DeclareMathOperator{\mult}{mult}
\DeclareMathOperator{\ord}{ord}
\DeclareMathOperator{\spec}{Spec}
\DeclareMathOperator{\lt}{LT}
\DeclareMathOperator{\lm}{LM}
\DeclareMathOperator{\lc}{LC}
\DeclareMathOperator{\nf}{NF}
\DeclareMathOperator{\snf}{S-NF}
\DeclareMathOperator{\spo}{Spoly}
\DeclareMathOperator{\mon}{Mon}
\DeclareMathOperator{\In}{In}
\DeclareMathOperator{\gr}{Gr}
\DeclareMathOperator{\ec}{ecart}
\renewcommand{\:}{\colon}
\newcommand{\gen}[1]{\langle #1 \rangle}
\newcommand{\ol}[1]{\overline #1 }
\newcommand{\defgen}[2]{\left\langle #1 \mid #2 \right\rangle}
\newcommand{\defset}[2]{{\left\{\left.#1\,\right| \,#2  \right\}}}
\newcommand{\ds}{>_{ds}}
\newcommand{\al}{\alpha}
\newcommand{\xa}{x^{\alpha}}
\newcommand{\dis}{\displaystyle}
 \theoremstyle{plain}
\newtheorem{thm}{Theorem}[section]
\newtheorem{prop}[thm]{Proposition}
 \theoremstyle{definition}
\newtheorem{defn}[thm]{Definition}
\newtheorem{ex}[thm]{Example}
\newtheorem*{acknowledgement}{Acknowledgement}
\theoremstyle{remark}
\newtheorem{rem}[thm]{Remark}
\numberwithin{equation}{section}
\newcommand{\thmref}[1]{Theorem~\ref{#1}}
\newcommand{\proref}[1]{Proposition~\ref{#1}}
\newcommand{\remref}[1]{Remark~\ref{#1}}
\newcommand{\defref}[1]{Definition~\ref{#1}}
\newcommand{\sref}[1]{Section~\ref{#1}}
\begin{document}

\title[The multiplicity of cyclic coverings of a singularity]{The multiplicity of cyclic coverings of a singularity of an algebraic variety}

\author{Toya Kumagai}
\address{Graduate School of Science and Engineering,Yamagata University, 
 Yamagata 990-8560, Japan.}
\email{s221269m@st.yamagata-u.ac.jp}

\author{Tomohiro Okuma}
\address{Department of Mathematical Sciences, 
Yamagata University, 
 Yamagata 990-8560, Japan.}
\email{okuma@sci.kj.yamagata-u.ac.jp}
\thanks{The second-named author was partially supported by JSPS Grant-in-Aid 
for Scientific Research (C) Grant Number 21K03215. }
\subjclass[2020]{Primary 14B05; Secondary 13P10, 14J17, 32S05}

\keywords{singularities, multiplicity,  tangent cone, standard bases, cyclic coverings}

\begin{abstract}
Let $V$ be an affine algebraic variety, and let  $p\in V$ be a singular point. 
For a regular function $g$ on $V$ such that $g(p)=0$ and for a positive integer $n$, we consider the cyclic covering $\phi_n\: V_n \to V$ of degree $n$ branched along the hypersurface defined by $g$.
We will prove that for sufficiently large $n$,  the tangent cone of $V_n$ at $\phi_n^{-1}(p)$ is, as an affine variety, the product of the tangent cone of the branch locus and the affine line.  
In particular, the multiplicity of the singularity $\phi_n^{-1}(p) \in V_n$, 
which is a function of $n$ determined by $V$ and $g$,  remains constant for sufficiently large $n$.
This result generalizes Tomaru's theorem for normal surface singularities.
\end{abstract}

\maketitle


\section{Introduction}

Let $V$ be an affine algebraic variety over an algebraically closed field $K$ and let $p\in V$ be a singular point. 
Then we call the pair $(V,p)$ a singularity.
We denote by $A$ the local ring $\cO_{V,p}$ of the variety $V$ at $p$, which is called the local ring of the singularity $(V,p)$.
The {\em multiplicity} $\mult(V,p)$ of the singularity $(V,p)$ is defined as the (Hilbert-Samuel) multiplicity of the local ring $A$ (see \cite[\S 14]{commring}).
The multiplicity is a fundamental invariant playing a crucial role in singularity theory.
Let $\m$ denote the maximal ideal of $A$, and let $d=\dim (V,p):=\dim A$.
If $\{f_1, \dots, f_d\}\subset A$ generates an $\m$-primary ideal, then it defines a local finite morphism $(V,p) \to (\A_K^d,o)$ (cf. \cite[Theorem 14.5]{commring}). From a geometric point of view, $\mult(V,p)$ can be interpreted as the minimum of the degree of such morphisms (cf. \cite[Exercises 6.3.6]{Singular}, \cite[Theorems 14.13, 14.14]{commring} and \cite[Ch. VIII, \S 10, Corollary 2]{Zariski-Samuel-II}).

In this paper, we investigate the multiplicity of singularities on branched cyclic coverings of a given singularity.
Let $g$ be a non-zero regular function on $V$ such that $g(p)=0$, and let $y$ be the coordinate function of the affine line $\A^1_K$.
For $n\in \Z_{>0}$, the natural projection $\phi_n\: V_n:=\{y^n=g\}\subset V\times \A^1_K \to V$ defines a cyclic covering of degree $n$ branched along the hypersurface $\{g=0\}\subset V$. Clearly, the fiber $\phi_n^{-1}(p)$ consists of the point $p_n:=p\times \{0\}\in V_n$. 
From the geometric perspective, we have the inequality $\mult(V_n, p_n) \le n\cdot \mult(V,p)$. 
We will show the following.

\begin{thm}\label{t:T}
There exists a positive integer $N$ such that the function $\mult(V_n,p_n)$ of $n$ is constant for $n > N$.
\end{thm}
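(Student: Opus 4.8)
The plan is to compute the tangent cone of $V_n$ at $p_n$ explicitly and show that, for large $n$, it is a fixed cone times $\A^1_K$; the constancy of the multiplicity then follows at once. Write $A=\cO_{V,p}$, let $\m$ be its maximal ideal, set $c=\ord(g)$, and recall that $\mult(V,p)=e(\gr_{\m}A)$, the multiplicity of the tangent cone at its vertex. Put $B=A[y]$ with maximal ideal $\mathfrak n=(\m,y)$, so that $A_n=B/(y^n-g)$ and $\m_n=\mathfrak n/(y^n-g)$. Since $y$ is a free variable of order $1$, there is a canonical identification $\gr_{\mathfrak n}B=(\gr_{\m}A)[Y]$ with $Y=\In(y)$, and the associated graded ring of the quotient is $\gr_{\m_n}A_n=\gr_{\mathfrak n}B/\In\big((y^n-g)B\big)$, where $\In(J)$ denotes the ideal of initial forms of an ideal $J$. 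Everything thus reduces to identifying $\In\big((y^n-g)B\big)$.

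The heart of the argument is the claim that for all sufficiently large $n$,
\begin{equation*}
\In\big((y^n-g)B\big)=\In\big((g)A\big)\cdot\gr_{\mathfrak n}B,
\end{equation*}
the extension to $(\gr_{\m}A)[Y]$ of the initial ideal of the principal ideal $(g)A\subset A$. Granting this, $\gr_{\m_n}A_n=\big(\gr_{\m}A/\In((g)A)\big)[Y]$. The factor $\gr_{\m}A/\In((g)A)$ is precisely the tangent cone $\gr_{\m_W}\cO_{W,p}$ of the branch locus $W=\{g=0\}$ at $p$, so $\gr_{\m_n}A_n$ is the coordinate ring of $(\text{tangent cone of }W)\times\A^1_K$. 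As adjoining a free variable leaves the Hilbert--Samuel multiplicity unchanged, $\mult(V_n,p_n)=e(\gr_{\m_W}\cO_{W,p})=\mult(W,p)$, which is independent of $n$; this yields the theorem, with eventual value equal to the multiplicity of the branch locus.

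To prove the claim I would compare orders on $B$ via the decomposition $(y^n-g)h=hy^n-gh$, valid for every $h\in B$ (the ideal is principal, $y^n-g$ being monic and hence a nonzerodivisor). First, $\In((g)B)=\In((g)A)[Y]$ because $y$ is free, so $\In(gh)\in\In((g)A)[Y]$ for every multiple $gh$. The inclusion $\supseteq$ follows by taking, for each generator $\In(gb)$ of $\In((g)A)$, the element $gb-by^n=-(y^n-g)b\in(y^n-g)B$, whose initial form equals $\In(gb)$ as soon as $\ord(gb)<\ord(b)+n$, which holds for $n>D$ by the bound below. The inclusion $\subseteq$ is the delicate one: for $f=hy^n-gh$ I must guarantee that the low-order part $-gh$ always strictly dominates $hy^n$, i.e. that $\ord(gh)<\ord(h)+n$, so that $\In(f)=-\In(gh)\in\In((g)A)[Y]$ and no power of $Y$ can enter a leading form.

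The main obstacle is exactly this last point: a uniform bound $\ord(gh)\le\ord(h)+D$ with $D$ independent of $h$. I would establish it by the Artin--Rees lemma applied to the submodule $gA\subseteq A$ for the $\m$-adic filtration: there is a constant $D$ with $\m^m\cap gA=\m^{m-D}(\m^D\cap gA)\subseteq g\m^{m-D}$ for $m\ge D$; writing $gh\in\m^{\ord(gh)}\cap gA$ and cancelling the nonzerodivisor $g$ gives $\ord(h)\ge\ord(gh)-D$. The passage from $A$ to $B=A[y]$ is then routine, since $\ord_B$ of a polynomial in $y$ is the minimum over its coefficients of $\ord_A+(\text{degree in }y)$. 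Taking $N=D$ makes the claim hold for all $n>N$. I emphasize that this bound requires $g$ to be a nonzerodivisor in $A$ (automatic when $(V,p)$ is, for instance, normal): otherwise $gh=0$ for some $h\ne0$ would force $Y$ into the leading form of $hy^n=(y^n-g)h$, and the conclusion genuinely fails.
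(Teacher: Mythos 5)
Your proof is correct, but it takes a genuinely different route from the paper's. The paper works with the presentation $A=(K[x]/I)_{\gen{x}}$ and standard bases: it fixes a standard basis of $I+\gen{g}$ produced by Mora's algorithm from a standard basis of $I$ together with $g$, sets $N$ equal to the maximal order occurring in that basis, and shows that replacing $g$ by $g-y^n$ throughout the whole computation yields a standard basis of $I_n$ with the same initial forms; the delicate step is certifying that the lifted set is closed under $s$-polynomial reduction, and that is where the nonzerodivisor hypothesis on $g$ enters there. You instead work intrinsically in the local ring and compute the full ideal of initial forms $\In\bigl((y^n-g)B\bigr)$ directly, the one real input being the uniform Artin--Rees bound $\ord(gh)\le\ord(h)+D$; this sidesteps standard bases entirely and, because you identify the whole initial ideal rather than a finite generating set for it, you never need to verify any standard-basis criterion. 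Your version is more general (it applies to any Noetherian local ring with $g$ a nonzerodivisor, no affine presentation needed) and both arguments in fact prove the stronger statement that the tangent cone of $(V_n,p_n)$ is the tangent cone of the branch divisor times $\A^1_K$, with the same caveat about $g$ being a nonzerodivisor that the paper records in a remark. What the paper's approach buys in exchange is effectivity: its $N$ is computable from a standard-basis calculation and the tangent cone comes with explicit equations, whereas the Artin--Rees constant $D$ is not explicit. Two cosmetic points: $B$ should be the localization of $A[y]$ at $(\m,y)$ rather than $A[y]$ itself, and where you say ``no power of $Y$ can enter a leading form'' you mean only that the summand $hy^n$ cannot contribute to $\In\bigl((y^n-g)h\bigr)$ --- initial forms of elements of $gB$ do of course involve $Y$.
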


\noindent
This result is motivated by Tomaru's result \cite{tomaru-Kag} which proves the theorem above for normal complex analytic singularities of dimension two.
Tomaru used the fact that if $M$ represents the maximal ideal cycle on a resolution with nice property, then the multiplicity coincides with $-M^2$. 
To prove the theorem, he constructed a branched cyclic covering $\psi_n\: \tilde V_n \to \tilde V$ of resolution spaces of $V_n$ and $V$, and computed the maximal ideal cycle on $\tilde V_n$ from data of $\tilde V$ via $\psi_n$. 
While this method is effective  for analyzing the maximal ideal cycles, it becomes somewhat intricate when we focus just on the function $\mult(V_n,p_n)$.
In this paper, we adopt the standard bases of ideals of the localization of the polynomial rings, and prove a stronger result as follows (see \thmref{t:main} and \remref{r:TC}).

\

\begin{thm}\label{t:M}
There exists a positive integer $N$, which is determined by a standard basis of an ideal defining the singularity $(V,p)$ and the regular function $g$, such that the affine tangent cone of $(V_n, p_n)$ is isomorphic to the product of the affine tangent cone of the subscheme of $V$ defined by $g$ at $p$ and the affine line $\A^1_K$.
\end{thm}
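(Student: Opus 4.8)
The plan is to recast everything in terms of standard bases and to reduce the assertion to a single equality of initial ideals that becomes independent of $n$. Fix a closed embedding $(V,p)\hookrightarrow(\A^N_K,o)$, put $R=K[x_1,\dots,x_N]_{\m}$ with $\m=\gen{x_1,\dots,x_N}$, and let $I\subset R$ be the ideal of $V$, so that $\cO_{V,p}=R/I$. Fix a local degree ordering $\ds$ on $R$ and generators $f_1,\dots,f_r$ of $I$; recall that the affine tangent cone of a subscheme through $o$ is the spectrum of $K[x]$ modulo the initial ideal $\In(\,\cdot\,)$, and that the initial forms of a standard basis generate this initial ideal. In particular the branch locus $\{g=0\}\subset V$ is $\spec\bigl(R/(I+\gen{g})\bigr)$, whose affine tangent cone is $\spec\bigl(K[x]/\In(I+\gen{g})\bigr)$. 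The target of the proof is therefore the single identity $\In(J_n)=\In(I+\gen{g})\,K[x,y]$ for $n\gg0$, where $J_n$ is the ideal of $V_n$.

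Extend $\ds$ to a local degree ordering on $R[y]_{(\m,y)}$ with $\deg y=1$, breaking ties so that $y$ follows the $x_i$; then $J_n=\gen{f_1,\dots,f_r,\ y^n-g}$. Writing $m=\ord(g)$, for $n>m$ one has $\ord(y^n-g)=m$ and $\lt(y^n-g)=-\lt(g)$, so the leading data of $y^n-g$ is $y$-free and agrees, up to sign, with that of $g$. The idea is to run Mora's standard-basis algorithm on the generating set $\{f_1,\dots,f_r,\ y^n-g\}$ and to compare it, step by step, with the same algorithm applied to $\{f_1,\dots,f_r,\ g\}$, the latter producing a standard basis $H$ of $I+\gen{g}$ with $\In(I+\gen{g})=\gen{\In(h):h\in H}$.

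The heart of the argument is that, for $n$ large, these two runs coincide at the level of leading and initial terms. Since $y$ enters only through the monomial $y^n$, each S-polynomial $\spo(f_i,y^n-g)$ equals $\pm\,\spo(f_i,g)-(\text{monomial})\,y^n$, where the $y^n$-term has order at least $n-C$ for a constant $C$ bounded by the exponents occurring in $f_1,\dots,f_r$ and $g$; the same holds for every later S-polynomial, the $y$-carrying part always being a monomial multiple of $y^n$. For $n$ large such terms never reach the leading position, and they are removed by the single rewriting rule $y^n\mapsto g$ coming from $y^n-g$, after which one is back in $K[x]$. Consequently the algorithm on $\{f_i,\ y^n-g\}$ reproduces on initial forms the algorithm on $\{f_i,g\}$: it terminates in a standard basis $G_n$ of $J_n$ whose initial forms are precisely those of $H$, all lying in $K[x]$. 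This gives $\In(J_n)=\In(I+\gen{g})\,K[x,y]$; because this ideal is generated by $y$-free forms, its zero scheme is the product of the affine tangent cone of $\{g=0\}$ with the affine line $\A^1_K$ in the free coordinate $y$, which is the assertion. The threshold $N$ is read off from the exponents in $f_1,\dots,f_r$ and $g$ and in the finitely many S-polynomials of $H$, so it depends only on a standard basis of $I$ and on $g$, as claimed.

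The step I expect to be the main obstacle is exactly this uniform control in $n$: a priori the number of reduction steps, or the $\ec$ of the intermediate terms, in Mora's normal-form algorithm could grow with $n$, so that no single threshold would suffice. The observation that defeats this is that $y$ can be created only as $y^n$ and destroyed only through $y^n\mapsto g$, each application changing the $y$-degree by the fixed amount $n$ while contributing $x$-terms whose orders are bounded in terms of $H$ and $g$. Thus the orders of all intermediate terms, and the number of reductions, are controlled by the fixed data together with a bound that stays strictly below the moving $y^n$-threshold once $n$ exceeds a value determined by that data; beyond this $N$ every S-polynomial reduces to $0$ and $\In(J_n)$ is the $n$-independent ideal $\In(I+\gen{g})\,K[x,y]$.
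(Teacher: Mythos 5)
Your overall strategy is the same as the paper's: run the standard--basis computation for $I+\gen{g}$ in $K[x]$, shadow it in $K[x,y]$ after replacing $g$ by $g-y^n$, observe that for $n$ exceeding a bound $N$ read off from the orders occurring in that finite computation the extra terms are multiples of $y^n$ and hence never reach the leading position, and conclude $\In(I_n)=\In(I+\gen{g})K[x,y]$, which gives the product description of the tangent cone. Up to that point your sketch matches \thmref{t:main} and \remref{r:TC}.

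There is, however, a genuine gap at the decisive step, namely the verification that the shadowed set is actually a standard basis of $I_n$ (Buchberger/Mora criterion). You claim the accumulated $y^n$-terms ``are removed by the single rewriting rule $y^n\mapsto g$ coming from $y^n-g$.'' No such rule is available: for a local degree ordering and $n>\ord(g)$ the leading monomial of $y^n-g$ is $\lm(g)\in\mon(x)$, so division by $y^n-g$ rewrites $\lm(g)$ (and in fact \emph{creates} $y^n$-terms), it never eliminates a term of the form $a\,y^n$. The real difficulty is exactly the residue left when the $K[x]$-reduction of an $s$-polynomial terminates at $0$: the shadowed reduction terminates at $-a_{m+1}y^n$ with $a_{m+1}\in K[x]$, whose leading monomial involves $y^n$ and is divisible by no leading monomial of the candidate basis unless one first shows $a_{m+1}\in I$. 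The paper gets $a_{m+1}\in I$ from $a_{m+1}g\in I$ using that $I$ is prime and $g\notin I$ (i.e.\ $g$ is a nonzerodivisor mod $I$), and then reduces $a_{m+1}$ to $0$ against the standard basis of $I$ and multiplies that reduction by $y^n$. Your argument never invokes this hypothesis, and it cannot be dispensed with: the paper's Example with $I=\gen{xy}$ and $g=x^{\alpha}$ (where $g$ is a zerodivisor mod $I$) has $\mult(B_n)=\alpha+n$ growing with $n$, so the conclusion is false there. You need to add the nonzerodivisor hypothesis and the reduction of $a_{m+1}$ by a standard basis of $I$ to close the argument.
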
 

For these results, we do not make assumptions about the Cohen-Macaulayness or normality of the singularity.
Since the multiplicity is determined by the tangent cone, \thmref{t:M} implies \thmref{t:T}. The main theorem may also reveal which invariants of singularities are not determined (or bounded) by the invariants of their tangent cones (see \remref{r:pg}).  

In \sref{s:std}, we provide a brief overview of the algorithm and criterion for the standard basis and the description of the tangent cones, both of which are crucial for the proof of the main theorem in \sref{s:main}.

\begin{acknowledgement}
The authors are very grateful to the referee for a careful reading of the article and valuable comments which helped improve the description of terminologies and methods, as well as the proof of the main theorem.
\end{acknowledgement}

\section{Standard bases and tangent cones }\label{s:std}

In this section, we introduce some notation and provide a brief description of the standard bases, as well as the tangent cone of a local ring in terms of standard bases.
A standard basis is called a Gr{\" o}bner basis if the monomial ordering is global.
We refer to \cite{Singular} for basics about the standard basis and related results.
In the following, we denote an ideal generated by $F=\{f_1, \dots, f_m\}$ as $\gen{F}$ or $\gen{f_1, \dots, f_m}$.

Let $K$ be an algebraically closed field and let $K[x] = K[x_1, \dots, x_r]$ denote the  polynomial ring in $r$ variables over $K$.
Let $R=K[x]_{\gen{x}}$ denote the localization of $K[x]$ with respect to the maximal ideal $\gen{x}=\gen{x_1, \dots, x_r}$. 
We may regard $R$ as a subring of the power series ring $K[[x]]$ in a natural way.
A monomial $\prod_{i=1}^rx_i^{\alpha_i} \in K[x]$ is denoted by $x^{\alpha}$, where $\alpha=(\alpha_1, \dots, \alpha_r)\in (\Z_{\ge 0})^r$, 
and the degree of $x^{\alpha}$ is defined to be $\deg x^{\al}:=\sum_{i=1}^r\alpha_i$.
Let $\mon(x)=\mon(x_1, \dots, x_r)$ denote the semigroup of monomials,  namely, $\mon(x)=\defset{x^{\alpha}}{\alpha\in (\Z_{\ge 0})^r}$.
A total ordering $>$ on $\mon(x)$ is called a  {\em monomial ordering} if $x^{\alpha}>x^{\beta}$ implies $x^{\alpha}x^{\gamma}>x^{\beta}x^{\gamma}$ for all  $x^{\alpha}, x^{\beta}, x^{\gamma} \in \mon(x)$. A monomial ordering $>$ is said to be {\em global} (resp. {\em local})
if $x^{\alpha} > 1$ (resp. $x^{\alpha}<1$) for all $x^{\alpha}\in \mon(x)\setminus \{1\}$.
If $ \deg x^{\alpha} < \deg x^{\beta}$ implies $x^{\alpha} > x^{\beta}$, the ordering $>$ is called a {\em local degree ordering}. 
Local orderings play a crucial role in the analysis of ideals in the local ring $R$.
As an example, we introduce the {\em negative degree reverse lexicographical ordering} $\ds$, which is defined as follows:
\begin{align*}
x^{\alpha}\ds x^{\beta}  \overset{\text{def}}{\Longleftrightarrow}
& \deg x^{\alpha} < \deg x^{\beta}, \quad \text{or} \\
&  \deg x^{\alpha} = \deg x^{\beta} \ \text{and there exists $i$ such that} \\
& \quad \alpha_n=\beta_n, \dots,   \alpha_{i+1}=\beta_{i+1}, \alpha_i < \beta_i.
\end{align*}

In the following, we fix a local degree ordering  $>$  on $\mon(x)$.

Let $R^*$ denote the set of units of $R$. 
Let $f\in R\setminus \{0\}$.
Then there is $u\in R^* \cap K[x]$ with $u(0)=1$ such that $uf\in K[x]$. 
Let us write it as
\[
uf= \sum_{\al}a_{\al}\xa 
= \sum_{i=1}^na_{i}x^{\al(i)}, \quad a_{i}\ne 0 \ \ \text{for $1\le i\le n$}, \quad x^{\al(1)} > x^{\al(2)}>\dots
\]
Then we define:
\begin{itemize}
\item $\lm(f):=x^{\al(1)}$, the {\em leading monomial} of $f$,
\item $\lt(f):=a_1x^{\al(1)}$, the {\em leading term} of $f$, 
\item $\lc(f):=a_1$, the {\em leading coefficient} of $f$, 
\item $\ord(f):=\min\defset{\deg \xa}{a_{\al}\ne 0}=\deg x^{\al(1)}$,  the {\em order} of $f$,
\item $\dis \In (f):=\sum_{\deg \xa = \ord(f)} a_{\al}\xa$, the {\em initial form} of $f$.
Let $\In(0)=0$.
\end{itemize}

Obviously,  for $f\in K[x]\setminus \{0\}$, $\lm(f)$ appears in $\In(f)$.

\begin{defn}
Let $f, g \in R\setminus\{0\}$, and write $\lm(f)=x^{\al}$ and $\lm(g)=x^{\beta}$.
Let 
\[
\gamma=\lcm(\al,\beta):=(\max(\al_1, \beta_1), \dots, \max(\al_r,\beta_r)).
\]
Then the {\em $s$-polynomial} of $f$ and $g$ is defined to be
\[
\spo(f,g)=x^{\gamma-\al}f-\frac{\lc(f)}{\lc(g)}x^{\gamma-\beta}g.
\]
Clearly, $\spo(f,g)\in K[x]$ if $f,g\in K[x]$.
\end{defn}

\begin{defn}
Let $I\subset R$ be an ideal and $G\subset R$ a subset.
\begin{enumerate}
\item  The ideal 
\[
L(G):=\defgen{\lm(g)}{g\in G\setminus\{0\}} \subset K[x]
\]
is called the {\em leading ideal} of $G$.
\item If $G$ is a finite subset of $I$ and $L(I)=L(G)$, then $G$ is called a {\em standard basis} of $I$.

\item The {\em initial ideal} of $I$ is defined to be 
\[
\In (I) = \defgen{\In(f)}{f\in I \setminus \{0\}} \subset K[x].
\]
\end{enumerate}
\end{defn}

\begin{prop}[{cf. \cite[5.5.11--5.5.12]{Singular}}]
\label{p:In}
Let $I \subset \gen{x}\subset K[x]$ be an ideal, and let $A=R/IR$ and $\m \subset A$ the maximal ideal of $A$.
\begin{enumerate}
\item 
If $\{f_1, \dots, f_s\}$ is a standard basis of $I$, then $\In(I)=\gen{\In(f_1), \dots, \In(f_s)}$.
\item  The tangent cone $\gr_{\m}(A)=\bigoplus_{n\ge 0}\m^n/\m^{n+1}$ is isomorphic to $K[x]/\In(I)$ as graded $K$-algebra.
\end{enumerate}
In particular, the  multiplicity of the local ring $A$ is determined by the initial forms of a standard basis of $I$.
\end{prop}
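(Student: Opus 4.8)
The plan is to establish the two assertions in turn and then read off the statement about multiplicity. Throughout I would use that $>$ is a \emph{local degree ordering}, so that $\ord(f)=\deg\lm(f)$ for every $f$ and hence $\lm(f)$ occurs in the initial form $\In(f)$; equivalently $\lm(\In(f))=\lm(f)$. In (1) the inclusion $\gen{\In(f_1),\dots,\In(f_s)}\subseteq\In(I)$ is immediate because each $f_i\in I$, so the real task is the reverse inclusion, namely to show that every $\In(f)$ with $f\in I\setminus\{0\}$ lies in the ideal generated by the $\In(f_i)$.

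For the reverse inclusion I would use a degree-preserving reduction. Fix $f\in I\setminus\{0\}$ and set $p=\In(f)$, homogeneous of degree $d=\ord(f)$. Since $\{f_1,\dots,f_s\}$ is a standard basis, $\lm(f)\in L(I)=\gen{\lm(f_1),\dots,\lm(f_s)}$, so $\lm(f)=x^{\gamma}\lm(f_i)$ for some $i$; then $x^{\gamma}\In(f_i)$ is homogeneous of degree $d$ with leading monomial $\lm(f)$, and subtracting a suitable scalar multiple gives $p'=p-c\,x^{\gamma}\In(f_i)$, again homogeneous of degree $d$, with $\lm(p')<\lm(f)$ or $p'=0$. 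Since $p'$ is exactly the degree-$d$ part of $f-c\,x^{\gamma}f_i\in I$, either $p'=0$ or $p'=\In(f-c\,x^{\gamma}f_i)$ is itself the initial form of an element of $I$, so the argument repeats. The one delicate point---and what I regard as the main obstacle---is termination: as $>$ is local it is not a well-ordering, so one cannot merely induct on a descending chain of leading monomials. This is precisely where the degree-ordering hypothesis is used: every polynomial produced by the reduction stays homogeneous of the fixed degree $d$, so its leading monomial ranges over the \emph{finite} set of degree-$d$ monomials in $r$ variables, on which $>$ restricts to a total order; a strictly decreasing sequence there must stop, necessarily at $0$. Reading off the steps expresses $p=\In(f)$ as a combination $\sum_j c_j x^{\gamma_j}\In(f_{i_j})$, proving (1).

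For (2) I would pass to associated graded rings. One identifies $\gr_{\gen{x}}(R)=\bigoplus_{n\ge0}\gen{x}^n/\gen{x}^{n+1}$ with $K[x]$ as graded $K$-algebras, the degree-$n$ component being spanned by the degree-$n$ monomials. Because $\m=\gen{x}A$, the surjection $R\to A=R/IR$ is filtered and induces a graded surjection $K[x]\cong\gr_{\gen{x}}(R)\twoheadrightarrow\gr_{\m}(A)$ whose kernel is the leading-form ideal $\defgen{\In(f)}{f\in IR\setminus\{0\}}$. It then remains to identify this ideal with $\In(I)$: every element of $IR$ has the form $g/s$ with $g\in I$ and $s$ a unit of $R$, and dividing by a unit (whose value at the origin is nonzero) changes the initial form only by a nonzero scalar, so the leading-form ideal of $IR$ coincides with $\In(I)$. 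This yields the graded isomorphism $\gr_{\m}(A)\cong K[x]/\In(I)$, noting that $\In(I)$ is homogeneous so that the quotient is graded.

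Finally, the multiplicity statement is formal: the Hilbert function $n\mapsto\dim_K \m^n/\m^{n+1}$ of $\gr_{\m}(A)$ determines the Hilbert--Samuel multiplicity of $A$, and by (2) this graded ring equals $K[x]/\In(I)$, while by (1) $\In(I)=\gen{\In(f_1),\dots,\In(f_s)}$ depends only on the initial forms of a standard basis. Hence the tangent cone, and with it $\mult(A)$, is determined by those initial forms.
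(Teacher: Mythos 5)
The paper gives no proof of this proposition; it is quoted from Greuel--Pfister (5.5.11--5.5.12), so there is no internal argument to compare against. Your proof is correct and follows the standard route: in (1) you correctly isolate the real issue (termination of the reduction, since a local ordering is not a well-ordering) and resolve it by noting that the reduction is degree-preserving, so the leading monomials range over the finite set of degree-$d$ monomials; in (2) the identification of the kernel of $K[x]\cong\gr_{\gen{x}}(R)\twoheadrightarrow\gr_{\m}(A)$ with the leading-form ideal of $IR$, together with the observation that dividing by a unit of $R$ rescales the initial form by the nonzero constant $s(0)^{-1}$, correctly reduces $\In(IR)$ to $\In(I)$. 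I see no gaps.
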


We provide a very brief overview of the criterion for the standard bases and an algorithms to compute them with respect to a local ordering. These are essentially Buchberger's algorithms but have been generalized by Mora (\cite{Mora}).

\begin{defn}[{cf. \cite[1.6.4--1.6.5]{Singular}}]
Let $\cG$ denote the set of all finite subsets of $R$. 
A map 
\[
\nf\: R \times \cG \to R, \quad (f, G)\mapsto \nf(f\mid G),
\]
is called a {\em weak normal form} on $R$ if  the following are satisfied:
\begin{enumerate}
\item For all $G\in \cG$, $\nf(0\mid G)=0$.
\item For all $f\in R$ and $G\in \cG$, $\nf(f\mid G)\ne 0$ implies $\lm(\nf(f\mid G))\not \in L(G)$.
\item For all $f\in R$ and $G=\{g_1, \dots, g_s\}\in \cG$, there exists a unit $u\in R^{*}$ such that  $uf-\nf( f \mid G)$ (or, by abuse of notation, $uf$) has a standard representation with respect to $\nf( - \mid G)$, namely,
\[
uf-\nf(f\mid G) = \sum_{i=1}^s a_ig_i \quad (a_i \in R)
\]
with the property that $\lm(\sum_{i=1}^s a_ig_i) \ge \lm(a_ig_i)$ for all $i$ such that $a_ig_i\ne 0$.
\end{enumerate}

A weak normal form $\nf$ is called {\em polynomial} if, whenever $f\in K[x]$ and $G\subset K[x]$,  in the condition (3) above,  $u$ and $a_i$ can be chosen as elements of $K[x]$.
\end{defn}

\begin{defn}[cf.  {\cite[2.6]{Mora}}]
\label{d:snf}
Let $\cG$ denote the set of all finite subsets of $R$. 
Let $h\in R$ and $G\in \cG$. Assume that there exist sequences $\{h_0=h, h_1, \dots, h_m\}\subset R$ and $\{G_0=G, G_1, \dots, G_m\}\subset \cG$ such that  
\begin{gather*}
\exists g_i\in G_i, \ 
\lm(g_i) \mid \lm(h_i), \
h_{i+1}=\spo(h_i, g_i), \
G_{i+1}=G_i\cup \{h_i\}, \\
h_m=0 \text{ \ or \ $\lm(g) \nmid \lm(h_m)$ for all $g\in G_m$.}
\end{gather*}
Then we call $h_m$ an  {\em $s$-normal form} of $h$ with respect to $G$.
Clearly, we have  $\{h_i\}\subset K[x]$ if $h\in K[x]$ and $G\subset K[x]$. 
We write as 
\[
\snf(h\mid G)=0
\]
 to mean that an $s$-normal form of $h$ with respect to $G$ exists and is zero.
\end{defn}

Note that in general, a sequence $\{h_0=h, h_1, \dots, h_m\}\subset R$ with the property in \defref{d:snf} does not exist; in other words, the procedure may not terminate (cf. \cite[\S 1.7]{Singular}).

Buchberger's criterion for Gr{\" o}bner bases is also effective for standard bases with local ordering.

\begin{thm}
[Buchberger's Criterion (cf. {\cite[1.7.3]{Singular}})]
\label{t:Buchberger}
Let $I \subset R$ be an ideal and $G =\{g_1, \dots, g_s\} \subset I$. 
Let  $\nf$ be a weak normal form on $R$ with respect to $G$. 
Then the following are equivalent:
\begin{enumerate}
\item $G$ is a standard basis of $I$.
\item  $\nf( f \mid G) = 0$ for all $f\in I$.
\item Each $f\in I$ has a standard representation with respect to $\nf(- \mid G)$.
\item $G$ generates $I$ and $\nf(\spo(g_i,g_j) \mid G)=0$ for $i, j=1, \dots, s$.
\item $G$ generates $I$ and $\nf(\spo(g_i,g_j) \mid G_{ij})=0$ for 
a suitable subset $G_{ij}\subset G$ and  $i, j=1, \dots, s$.
\end{enumerate}
\end{thm}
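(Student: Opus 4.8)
The plan is to show the five conditions equivalent by closing two cycles of implications through $(1)$: a ``soft'' cycle $(1)\Rightarrow(2)\Rightarrow(3)\Rightarrow(1)$ that uses only the three defining axioms of a weak normal form, and the cycle $(1)\Rightarrow(4)\Rightarrow(5)\Rightarrow(1)$ whose last arrow $(5)\Rightarrow(1)$ carries the genuine content of the criterion. Throughout I would exploit that $>$ is a \emph{local} degree ordering: a unit $u\in R^{*}$ has nonzero constant term, so $\lm$ (the lowest-degree part) is merely rescaled and $\lm(uh)=\lm(h)$; and $\lm(h_1+h_2)\le\max\{\lm(h_1),\lm(h_2)\}$.

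For the soft cycle I would argue as follows. $(1)\Rightarrow(2)$: if $\nf(f\mid G)\ne 0$ for some $f\in I$, put $h=\nf(f\mid G)$; axiom (3) gives a unit $u$ with $uf-h=\sum a_ig_i$, so $h=uf-\sum a_ig_i\in I$ since $f,g_i\in I$, whence $\lm(h)\in L(I)=L(G)$, contradicting axiom (2). $(2)\Rightarrow(3)$ is immediate: with $\nf(f\mid G)=0$, axiom (3) already exhibits $uf=\sum a_ig_i$ as a standard representation. $(3)\Rightarrow(1)$: $L(G)\subseteq L(I)$ is clear, and conversely, for $f\in I\setminus\{0\}$ with standard representation $uf=\sum a_ig_i$ one has $\lm(f)=\lm(uf)=\max_i\lm(a_ig_i)=\lm(a_{i_0}g_{i_0})$ for some $i_0$, so $\lm(g_{i_0})\mid\lm(f)$ and $\lm(f)\in L(G)$. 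For the second cycle, $(1)\Rightarrow(4)$ first records that a standard basis generates $I$ (from $(2)$ and axiom (3), $f=u^{-1}\sum a_ig_i\in\gen{G}$) and then applies $(2)$ to $\spo(g_i,g_j)\in I$; and $(4)\Rightarrow(5)$ is trivial with $G_{ij}=G$.

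The crux is $(5)\Rightarrow(1)$, for which I would run a descent driven by the standard cancellation lemma for leading terms: if $c_1,\dots,c_t$ are terms with $\lm(c_kg_k)=x^{\delta}$ for all $k$ and $\sum_k\lt(c_kg_k)=0$, then $\sum_k c_kg_k=\sum_{k<l}\eta_{kl}\spo(g_k,g_l)$ with $\lm(\eta_{kl}\spo(g_k,g_l))<x^{\delta}$, proved by the usual telescoping (each consecutive difference is, up to a term, an $s$-polynomial). Given $f\in I=\gen{G}$, I write $f=\sum a_ig_i$ and set $x^{\delta}=\max_i\lm(a_ig_i)$. If $\lm(f)=x^{\delta}$ we are done as above; otherwise the top terms cancel, so $\sum_{i\in S}\lt(a_i)g_i$ (over the indices $S$ attaining $x^{\delta}$) meets the hypothesis of the cancellation lemma. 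Replacing each $\spo(g_i,g_j)$ by the standard representation furnished by (5) — which is valid with respect to $G$, a standard representation over $G_{ij}\subseteq G$ being one over $G$ — produces a new representation $f=\sum a_i'g_i$ with $\max_i\lm(a_i'g_i)<x^{\delta}$.

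The main obstacle is making this descent terminate, since in a local ordering $>$ is \emph{not} a well-ordering and naive rewriting could loop indefinitely. Here the hypothesis that $>$ is a local \emph{degree} ordering is decisive: every representation of the fixed $f$ has $x^{\delta}\ge\lm(f)$, hence $\deg x^{\delta}\le\ord(f)$, while the successive exponents strictly decrease in $>$ and so have weakly increasing degree. Thus all monomials $x^{\delta}$ produced lie in the finitely many degrees between the starting degree and $\ord(f)$; a strictly $>$-decreasing chain inside this finite set of monomials must stop, and it can only stop at a representation with $\lm(f)=x^{\delta}$, giving $\lm(f)\in L(G)$ and hence $L(I)=L(G)$. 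I expect the cancellation lemma (the precise bookkeeping of the $\eta_{kl}$ and the strict drop below $x^{\delta}$) together with this degree-boundedness of the descent to be the two points needing the most care; everything else follows formally from the weak normal form axioms.
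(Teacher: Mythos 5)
Your proposal is correct, but there is nothing in the paper to compare it against line by line: the paper states Buchberger's criterion with a citation to Greuel--Pfister \cite[1.7.3]{Singular} and gives no proof of its own, so the relevant comparison is with the reference's proof. Your soft cycle $(1)\Rightarrow(2)\Rightarrow(3)\Rightarrow(1)$ and the easy arrows $(1)\Rightarrow(4)\Rightarrow(5)$ follow the standard treatment and are sound (the unit $u$ is harmless since $\lm(u)=1$ for a local ordering, and a standard representation over $G_{ij}\subseteq G$ is indeed one over $G$). Where you genuinely diverge is $(5)\Rightarrow(1)$: the cited source proves the criterion for \emph{arbitrary} monomial orderings, where the naive Buchberger descent can fail to terminate, and therefore has to route the argument through Mora's ecart/homogenization machinery; you instead exploit the standing hypothesis, fixed in Section 2 of the paper, that $>$ is a local \emph{degree} ordering. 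Your termination argument is the key new point and it is valid: every maximal monomial $x^{\delta}$ occurring in a representation of a fixed $f$ satisfies $x^{\delta}\ge\lm(f)$, hence $\deg x^{\delta}\le\ord(f)$, so all the $x^{\delta}$ lie in the finite set of monomials of degree at most $\ord(f)$, and a strictly $>$-decreasing chain there must stop, necessarily at a representation with $\lm(f)=x^{\delta}\in L(G)$. This buys a short, self-contained proof exactly adequate for the paper's setting, at the price of generality: for a local non-degree ordering such as negative lex, where $y^{j}>x$ for all $j$, the set of monomials above $\lm(f)$ is infinite and your descent could decrease forever, which is precisely why the reference's proof is more elaborate. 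The two ingredients you flagged as delicate --- the term-cancellation lemma expressing $\sum_k c_k g_k$ with cancelling leading terms as $\sum_{k<l}\eta_{kl}\spo(g_k,g_l)$ with $\lm(\eta_{kl}\spo(g_k,g_l))<x^{\delta}$ (a purely term-level telescoping identity valid for any monomial ordering), and the degree-boundedness of the descent --- are indeed the only places needing care, and your handling of both is correct.
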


The following proposition is also useful in the proof of our main theorem.

\begin{prop}
[See {\cite[2.5-2.6, 3.1]{Mora}}] \label{p:snfstd}
Let $I \subset R$ be an ideal and $G \subset I$ a finite subset generating $I$. 
Then $G$ is a standard basis of $I$ if $\snf(\spo(g,g') \mid G)=0$ for any $g, g'\in G$.
\end{prop}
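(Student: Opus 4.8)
The plan is to derive the statement from Buchberger's Criterion (\thmref{t:Buchberger}). The idea is that the concrete, finite procedure computing an $s$-normal form (\defref{d:snf}) is an implementation of a weak normal form on $R$, so that the hypothesis ``$\snf(\spo(g,g')\mid G)=0$ for all $g,g'$'' becomes precisely condition (4) of that theorem, which forces $G$ to be a standard basis. The work therefore splits into two tasks: verifying that the $s$-normal form procedure produces a weak normal form in the sense of the preceding definition, and then reading off the criterion. The first task is the substance; the second is formal.

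For the first task, fix $h\in R$ and $G\in\cG$ and consider a terminating computation $h=h_0,h_1,\dots,h_m$ with auxiliary sets $G=G_0\subset\cdots\subset G_m$ as in \defref{d:snf}, and set $\nf(h\mid G):=h_m$. Two properties must be checked. The stopping condition gives $h_m=0$ or $\lm(g)\nmid\lm(h_m)$ for every $g\in G_m$; since $G\subset G_m$ this yields $\lm(\nf(h\mid G))\notin L(G)$ whenever the output is nonzero, which is axiom (2) of a weak normal form. For axiom (3), at the $i$-th step we have $\lm(g_i)\mid\lm(h_i)$, so $\gamma_i=\al_i$ and $h_{i+1}=\spo(h_i,g_i)=h_i-c_i\,x^{\gamma_i-\beta_i}g_i$ cancels the leading term of $h_i$; telescoping these identities expresses $h-h_m$ as the $R$-combination $\sum_i c_i x^{\gamma_i-\beta_i}g_i$. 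Because each $g_i$ lies in $G_i=G\cup\{h_0,\dots,h_{i-1}\}$, one must then substitute the intermediate remainders recursively to rewrite $h-h_m$ as a combination of the original generators in $G$; carrying out this back-substitution while tracking leading monomials produces a unit $u\in R^{*}$ and a standard representation $uh-\nf(h\mid G)=\sum_{g\in G}a_g g$ with $\lm(\sum_{g\in G} a_g g)\ge\lm(a_g g)$ for all $g$ with $a_g g\ne 0$. Together with the trivial axiom (1), this shows $\nf(-\mid G)$ is a weak normal form, and it is moreover polynomial since all $s$-polynomials, and hence all $h_i$, stay within $K[x]$ when $h$ and $G$ do.

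The one genuinely nontrivial point, which I would cite from \cite{Mora} rather than reprove, is twofold: that the procedure terminates at all, and that the back-substitution above can be arranged to respect the leading-monomial inequalities. In the local ordering the leading monomials satisfy $\lm(h_0)>\lm(h_1)>\cdots$, but they need not decrease in degree, so termination is not automatic; Mora's device is to enlarge $G_i$ by the remainder $h_{i-1}$ and to select each divisor $g_i$ so as to control the ecart, which forces the process to halt. The same bookkeeping is what makes the recursive elimination of the auxiliary remainders compatible with a standard representation over the original $G$ (and accounts for the unit $u$). This is exactly the content of \cite[2.5--2.6]{Mora}, and it is the main obstacle to a fully self-contained argument.

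Granting this, the conclusion is immediate. Define $\nf$ by using, on each input of the form $\spo(g,g')$, a zero-valued computation furnished by the hypothesis, and a fixed selection rule on all other inputs; by the preceding paragraph this is a weak normal form with $\nf(\spo(g,g')\mid G)=0$ for every pair $g,g'\in G$. Since $G$ generates $I$, condition (4) of \thmref{t:Buchberger} is satisfied for this $\nf$, and therefore $G$ is a standard basis of $I$.
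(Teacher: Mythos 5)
The paper does not actually prove this proposition; it is stated as a citation to \cite[2.5--2.6, 3.1]{Mora}, so there is no internal argument to compare yours against. Your reduction to \thmref{t:Buchberger} is the right idea and is essentially a correct reconstruction: the substance is exactly the lemma you isolate, namely that a terminating chain of $s$-polynomial reductions over the growing sets $G_i$ can be back-substituted into a standard representation $u\cdot\spo(g,g')=\sum_{g\in G}a_g g$ over the original $G$, with $u\in R^*$ arising because any reduction against an intermediate remainder $h_j$ multiplies $h_j$ by a monomial $x^{\delta}$ with $x^{\delta}<1$ (since $\lm(h_j)>\lm(h_i)$ forces $\delta\ne 0$), so the accumulated coefficient of $h_0$ is $1$ plus an element of $\gen{x}$. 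Two small corrections to your framing. First, termination is not something you need to import from \cite{Mora} here: by \defref{d:snf}, the hypothesis $\snf(\spo(g,g')\mid G)=0$ already \emph{asserts} the existence of a finite sequence ending at $0$, so only the back-substitution lemma needs to be cited or proved. Second, condition (4) of \thmref{t:Buchberger} is stated for a weak normal form, which must satisfy the axioms on all inputs; your ``fixed selection rule on all other inputs'' should be taken to be Mora's normal form (\proref{p:mora}) so that the axioms genuinely hold away from the $s$-polynomials, or else one should invoke the criterion in the form that only requires standard representations of the $\spo(g_i,g_j)$ (the implication underlying conditions (3)--(5)). With those adjustments the argument is sound and no more is deferred to \cite{Mora} than the paper itself defers.
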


There is an algorithm, proposed by Mora (\cite{Mora}, cf. \cite[1.7.6]{Singular}), that yields a polynomial weak normal form.
When employing a global ordering, $\nf(f \mid G)$ is obtained as the last element of a sequence $h_0=f, h_1, \dots, h_m$ such that $h_{i+1}=\spo(h_{i},h_i')$, where $h_i'$ is a suitable element of $G$. Mora's normal form with respect to a local ordering is obtained in a similar manner, but $h_i'$ is an element of $G\cup\{h_0, \dots, h_{i-1}\}$.
For the convenience of the readers, we state the algorithm.

For $f=\sum_{\al}a_{\al}\xa \in K[x]\setminus\{0\}$, let
\[
\ec(f) = \max\defset{\deg \xa}{a_{\al}\ne 0} -
\min\defset{\deg \xa}{a_{\al}\ne 0}.
\]

\begin{prop}
[cf. {\cite[1.7.6]{Singular}}]
\label{p:mora}
Let $\cG$ denote the set of all finite subsets of $K[x]$. 
Let $f\in K[x]\setminus\{0\}$ and  $G \in \cG$.
We construct sequences $\{h_0=f, h_1, \dots\}\subset K[x]$ and  $\{T_0 = G, T_1, \dots\}\subset \cG$ as follows: 
Suppose that we obtain $\{h_0, \dots, h_i\}$ and $\{T_0, \dots, T_i\}$.
If $h_i=0$ or $T'_i:=\defset{g\in T_i}{\lm(g) \mid \lm(h_i)}=\emptyset$, then we stop here.
In case $h_i\ne 0$ and $T'_i\ne \emptyset$, take $g\in T'_i$ with $\ec(g)=\min\defset{\ec(g')}{g'\in T'_i}$, let $T_{i+1}=T_i\cup\{h_i\}$ if $\ec(g)>\ec(h_i)$ and  $T_{i+1}=T_i$ otherwise, and let $h_{i+1}=\spo(h_i,g)$.
This algorithm terminates after a finite number of steps, and the last one in the sequence $\{h_0=f, h_1, \dots\}$ is a polynomial weak normal form of $f$ with respect to $G$.
\end{prop}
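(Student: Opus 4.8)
The statement has two independent parts: that the procedure terminates, and that its output $h_m$ is a polynomial weak normal form of $f$ with respect to $G$. Polynomiality is immediate, since every $h_i$, every divisor used, and every $s$-polynomial remains in $K[x]$. I expect termination to be the genuine obstacle. Because $>$ is local, the leading monomials satisfy $\lm(h_0)>\lm(h_1)>\cdots$, a strictly descending chain in $\mon(x)$, and such chains need not stabilise (e.g.\ $1>x_1>x_1^2>\cdots$); so the naive Buchberger argument fails, and the choice of a minimal-ecart divisor together with the enlargement $T_{i+1}=T_i\cup\{h_i\}$ must be used in an essential way.

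For termination the plan is to isolate the steps at which $T_i$ actually grows; call these the \emph{adding steps}, which occur exactly when the chosen divisor $g^{(i)}$ satisfies $\ec(g^{(i)})>\ec(h_i)$. Writing $\lm(h_i)=x^{\alpha_i}$, I would attach to each adding step $i$ the point $(\alpha_i,\ec(h_i))\in\Z_{\ge 0}^{r+1}$ and establish the dichotomy: if $i<i'$ are adding steps with $x^{\alpha_i}\mid x^{\alpha_{i'}}$, then necessarily $\ec(h_i)>\ec(h_{i'})$. Indeed $h_i\in T_{i'}$ once $i<i'$, so if $x^{\alpha_i}\mid x^{\alpha_{i'}}$ then $h_i\in T'_{i'}$ is available as a divisor of $\lm(h_{i'})$; minimality of the ecart of the chosen divisor together with the adding condition at step $i'$ forces $\ec(h_i)\ge\ec(g^{(i')})>\ec(h_{i'})$. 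Consequently no two adding steps yield a pair that is coordinatewise $\le$ in $\Z_{\ge 0}^{r+1}$, so by Dickson's lemma there are only finitely many adding steps.

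It then remains to bound the tail after the last adding step, where $T_i$ is a fixed finite set and every step is non-adding, i.e.\ $\ec(g^{(i)})\le\ec(h_i)$. The short $s$-polynomial estimate $\ec(h_{i+1})\le\max(\ec(h_i),\ec(g^{(i)}))$ shows the ecart is non-increasing along this tail, hence eventually equal to a constant. Comparing highest degrees, namely $\ord(h_{i+1})+\ec(h_{i+1})\le\ord(h_i)+\ec(h_i)$, then forces $\ord(h_i)=\ord(h_{i+1})$ once the ecart is constant, so all $\lm(h_i)$ in the tail have one and the same degree. Since there are only finitely many monomials of a given degree in $r$ variables and the $\lm(h_i)$ strictly decrease, the tail is finite, and the algorithm terminates.

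Finally I would verify that $h_m$ is a weak normal form. Conditions (1) and (2) are read off the stopping criterion: if $h_m\ne 0$ the loop stopped because $\lm(g)\nmid\lm(h_m)$ for every $g\in G\subset T_m$, that is $\lm(h_m)\notin L(G)$. For the standard representation (3) I would telescope the recursion: since $\lm(g^{(i)})\mid\lm(h_i)$ forces the $\lcm$ in the $s$-polynomial to equal $\lm(h_i)$, each step reads $h_{i+1}=h_i-p_i\,g^{(i)}$ with $p_i=\frac{\lc(h_i)}{\lc(g^{(i)})}\,x^{\delta_i}$, $x^{\delta_i}:=\lm(h_i)/\lm(g^{(i)})$, and $\lm(p_i g^{(i)})=\lm(h_i)$; hence $f=h_m+\sum_{i<m}p_i g^{(i)}$. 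Substituting back the earlier remainders for those $g^{(i)}$ lying in $T_i\setminus G$ expresses $h_m$ as $u\,f-\sum_{g\in G}a_g\,g$ with $a_g\in K[x]$. The unit property is the one subtle point and again uses the strict decrease of the $\lm(h_i)$: whenever $g^{(i)}$ is a previously produced remainder $h_j$ with $j<i$, one has $\lm(h_j)=\lm(g^{(i)})\mid\lm(h_i)$ while $\lm(h_j)>\lm(h_i)$, so the divisibility is proper, $\delta_i\ne 0$, and $p_i(0)=0$; thus each substitution leaves the constant term of the coefficient of $f$ unchanged, giving $u(0)=1$ and $u\in R^{*}$. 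The inequality $\lm(a_g g)\le\lm(f)$ required for a standard representation propagates through the substitutions from the single bound $\lm(p_i g^{(i)})=\lm(h_i)\le\lm(f)$. This yields condition (3) and completes the proof.
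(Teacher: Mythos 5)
The paper gives no proof of this proposition --- it is quoted from \cite[1.7.6]{Singular} --- and your argument is correct and is essentially the standard one found there: Dickson's lemma applied to the pairs $(\lm(h_i),\ec(h_i))$ at the enlargement steps, monotonicity of the ecart plus constancy of $\ord$ to kill the tail, and telescoping of the $s$-polynomial relations (with the unit $u$ coming from the proper divisibility $\lm(h_j)\mid\lm(h_i)$, $\lm(h_j)>\lm(h_i)$) for the standard representation. No gaps.
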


Note that Mora's normal form is also an $s$-normal form. The difference between them is whether the invariant $\ec$ is used in the process to obtain the normal forms.

Using Mora's  normal form, we can compute the standard basis  as follows (see \cite[1.7.1, 1.7.8]{Singular}).

\begin{prop}\label{p:std}
Let $I\subset R$ be an ideal generated by a finite subset $G_0\subset K[x]$ and let $\nf$ denote Mora's  polynomial weak normal form.
We construct a sequence $\{G_0 \subsetneq G_1 \subsetneq \dots\}$ of finite subsets of $K[x]$ as follows: if we obtain $\{G_0, \dots, G_i\}$ and 
if there exists $f,g\in G_i$ such that $h:=\nf(\spo(f,g)\mid G_i)\ne0$, then define $G_{i+1}$ to be $G_i\cup\{ h\}$.
This algorithm terminates after a finite number of steps, and the last subset in the sequence is a standard basis for $I$. 
\end{prop}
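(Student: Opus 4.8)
The plan is to prove the two assertions of the proposition separately: first that the procedure terminates after finitely many steps, and then that its terminal output is a standard basis of $I$. The driving force behind both is the defining properties of a weak normal form together with Buchberger's Criterion (\thmref{t:Buchberger}).

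For termination, the key observation is that the leading ideals of the sets $G_i$ form a strictly ascending chain in $K[x]$. At each step we adjoin an element $h=\nf(\spo(f,g)\mid G_i)$ with $h\ne 0$; by property~(2) of a weak normal form this forces $\lm(h)\notin L(G_i)$, whereas trivially $\lm(h)\in L(G_{i+1})$ since $h\in G_{i+1}$. Hence $L(G_i)\subsetneq L(G_{i+1})$ at every step. Since each $L(G_i)$ is a monomial ideal in the polynomial ring $K[x]$, an infinite strictly ascending chain $L(G_0)\subsetneq L(G_1)\subsetneq\cdots$ would contradict the ascending chain condition (equivalently, Dickson's lemma for monomial ideals). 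Therefore the construction must stop after finitely many steps, yielding a finite set $G=G_N$ for which no pair $f,g\in G$ satisfies $\nf(\spo(f,g)\mid G)\ne 0$.

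It then remains to check that this terminal $G$ is a standard basis of $I$. First I would verify that $G$ generates $I$, by induction on the steps of the algorithm: $G_0$ generates $I$ by hypothesis, and whenever $h=\nf(\spo(f,g)\mid G_i)$ is adjoined, property~(3) of the weak normal form provides a unit $u\in R^*$ and a standard representation $u\cdot\spo(f,g)-h=\sum_i a_i g_i$ with $g_i\in G_i$; since $\spo(f,g)\in\gen{f,g}\subset\gen{G_i}$, we obtain $h\in\gen{G_i}$, so $\gen{G_{i+1}}=\gen{G_i}$ and hence $\gen{G}=\gen{G_0}=I$. Because $G$ generates $I$ and $\nf(\spo(f,g)\mid G)=0$ for all $f,g\in G$ (precisely the termination condition), condition~(4) of \thmref{t:Buchberger} applies and shows that $G$ is a standard basis of $I$.

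The only genuinely substantive point is the strict growth of the leading ideals that forces termination; once property~(2) of the weak normal form is invoked this is immediate, and the rest is bookkeeping plus a direct appeal to Buchberger's Criterion. The subtlety worth flagging is that termination must rest on the chain of \emph{leading ideals} rather than on the sets $G_i$ themselves: a strictly increasing chain of finite sets need not be finite, whereas a strictly ascending chain of monomial ideals in $K[x]$ necessarily is, and that is exactly what property~(2) guarantees.
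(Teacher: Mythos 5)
Your proof is correct and is essentially the argument the paper implicitly relies on by citing \cite[1.7.1, 1.7.8]{Singular}: termination via the strictly ascending chain of leading ideals $L(G_0)\subsetneq L(G_1)\subsetneq\cdots$ forced by property~(2) of a weak normal form and Noetherianity of $K[x]$, preservation of the generated ideal via property~(3), and then condition~(4) of \thmref{t:Buchberger} at the terminal step. No gaps; your closing remark that termination must be read off the leading ideals rather than the sets $G_i$ themselves is exactly the right point to emphasize.
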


\section{The main results}\label{s:main}

In this section, we will prove the main theorem.
We basically use the notation from the preceding section.
Let $I \subset \gen{x}\subset K[x]$  be a prime ideal and $V\subset \A^r_K$ the affine variety defined by $I$. 
Let $A$ denote the local ring of the variety $V$ at the origin $o\in V$;
 namely, $A=R/IR$.
Let $K[x,y]=K[x_1, \dots, x_r, y]$ denote the polynomial ring in $r+1$ variables and let $B=K[x,y]_{\gen{x,y}}$, where $\gen{x,y}=\gen{x_1, \dots, x_r,y}$. 
Let $g\in \gen{x}\setminus I$.
For $n \in \Z_{>0}$, let $I_n=I+\gen{g-y^n}\subset K[x,y]$ and $B_n= B/ I_n B$.
Then $B_n$ is the local ring of the singularity $(V_n,p_n)$ in the Introduction, and  \thmref{t:T} immediately follows from  \proref{p:In} and the following.

\begin{thm}\label{t:main}
Let $\{f_1, \dots, f_m\} \subset K[x]\setminus \{0\}$  be a standard basis of $I$ and let $G_0=\{f_1, \dots, f_m, g\}$.
Let $\{g_1, \dots, g_t\}$ be a standard basis of $I+\gen{g}\subset K[x]$ which is obtained as the last element $G_k$ of the sequence  $G_0 \subsetneq \dots \subsetneq G_k\subset K[x]$, as described in \proref{p:std}.
Let $N=\max\defset{\ord(g_i)}{i=1, \dots, t}$.
Then, for $n>N$,  the tangent cone of the local ring $B_n$ is isomorphic to $K[x,y]/\gen{\In(g_1), \dots, \In(g_t)}$.
\end{thm}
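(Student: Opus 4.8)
The plan is to reduce everything to \proref{p:In}. Since $I_n \subseteq \gen{x,y}$, that proposition identifies the tangent cone of $B_n$ with $K[x,y]/\In(I_n)$, and tells us that $\In(I_n)$ is generated by the initial forms of \emph{any} standard basis of $I_n$. So it suffices to exhibit a standard basis of $I_n$ whose initial forms generate the ideal $\gen{\In(g_1), \dots, \In(g_t)}$ of $K[x,y]$. I emphasize at the outset that each $\In(g_i)$ lies in $K[x]$, so this ideal is the extension of $\In(I+\gen{g})$ to $K[x,y]$ and the quotient is $\bigl(K[x]/\In(I+\gen g)\bigr)[y]$, i.e.\ the product of the tangent cone of the branch locus with $\A^1_K$; this is the reformulation \thmref{t:M}.

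First I would lift the basis $\{g_1,\dots,g_t\}$ from $K[x]$ to $K[x,y]$. Because these $g_i$ are produced by the algorithm of \proref{p:std} starting from $G_0=\{f_1,\dots,f_m,g\}$, each is a \emph{polynomial} combination $g_i = \sum_j a_{ij}f_j + b_i g$ with $a_{ij},b_i\in K[x]$; I set $\hat g_i := \sum_j a_{ij}f_j + b_i(g-y^n) = g_i - b_i y^n \in I_n$. Since $n > N \ge \ord(g_i)$, the correction $b_i y^n$ has order $\ge n > \ord(g_i)$, whence $\ord(\hat g_i)=\ord(g_i)$ and both $\In(\hat g_i)=\In(g_i)$ and $\lm(\hat g_i)=\lm(g_i)$ are free of $y$. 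This is exactly where $n>N$ is decisive: if $n\le\ord(g_i)$ the monomial $y^n$ would survive into the initial form and inject $y$ into the leading ideal, altering the tangent cone. Note also that the lift of $g\in G_0$ (where $b_i=1$) is $g-y^n$, and the lifts of the $f_j$ are the $f_j$ themselves, so $\{\hat g_1,\dots,\hat g_t\}$ already contains $f_1,\dots,f_m,g-y^n$ and therefore generates $I_n$; no extra generator is needed.

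Next I would verify through Buchberger's criterion (\thmref{t:Buchberger}) and \proref{p:snfstd} that $\{\hat g_1,\dots,\hat g_t\}$ is a standard basis of $I_n$. The key computation is that, because $\lm(\hat g_i)=\lm(g_i)$ and $\lc(\hat g_i)=\lc(g_i)$, the $s$-polynomials are related by $\spo(\hat g_i,\hat g_j) = \spo(g_i,g_j) - P_{ij}\,y^n$ with $P_{ij}\in K[x]$. Since $\{g_1,\dots,g_t\}$ is a standard basis, every $\spo(g_i,g_j)$ admits an $s$-normal form equal to $0$ with respect to it; I would transport that reduction to $K[x,y]$, subtracting the matching multiples of the $\hat g_l$. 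At each step the $y$-free part reproduces verbatim a reduction step of $\spo(g_i,g_j)$, so it terminates at $0$, and the residue is a multiple $S_{ij}\,y^n$ whose coefficient lies in $I+\gen g$; invoking $y^n\equiv g \pmod{I_n}$ and the standard-basis property of $\{g_l\}$ once more, these tails reduce to $0$ as well, giving $\snf(\spo(\hat g_i,\hat g_j)\mid\{\hat g_l\})=0$.

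The main obstacle lies in this last step: I must guarantee that throughout these reductions \emph{no} leading monomial ever involves $y$, since otherwise a $y^n$-tail could fail to be divisible by any of the $y$-free $\lm(\hat g_l)$ and would force a genuinely new standard-basis element carrying $y$, enlarging the initial ideal. The essential mechanism is that for a local degree ordering each $s$-polynomial strictly raises the order; hence along the reduction of any $s$-polynomial that the algorithm actually \emph{records}, the orders increase only up to that of the recorded element, which is $\le N < n$, so the $y$-free part stays in degrees below $n$ and dominates the $y^n$-tail, keeping every leading monomial in $K[x]$. Establishing this order bound uniformly — in particular for those $s$-polynomials that reduce to zero, where intermediate orders are not a priori capped by $N$ — is the delicate heart of the argument and is precisely what singles out the threshold $N=\max_i\ord(g_i)$. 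Once it is secured, \proref{p:In} gives $\In(I_n)=\gen{\In(g_1),\dots,\In(g_t)}$, and the theorem follows.
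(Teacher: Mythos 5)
Your overall strategy is the same as the paper's: lift the standard basis $\{g_1,\dots,g_t\}$ of $I+\gen{g}$ to $\hat g_i=g_i-b_iy^n\in I_n$ via polynomial representations, use $n>N\ge \ord(g_i)$ to conclude $\In(\hat g_i)=\In(g_i)\in K[x]$, note that the lifted set contains $f_1,\dots,f_m,g-y^n$ and hence generates $I_n$, and then try to verify $\snf(\spo(\hat g_i,\hat g_j)\mid\{\hat g_l\})=0$ by transporting the reduction of $\spo(g_i,g_j)$ from $K[x]$. Up to the identity $\spo(\hat g_i,\hat g_j)=\spo(g_i,g_j)-P_{ij}y^n$ this coincides with the paper's proof.

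The gap is precisely at the step you yourself flag as the ``delicate heart,'' and your proposed mechanism for it does not work. You arrive at a residue $S_{ij}y^n$ with $S_{ij}\in I+\gen{g}$ and propose to kill it by ``invoking $y^n\equiv g\pmod{I_n}$'' together with the standard-basis property of $\{g_l\}$. But congruence modulo $I_n$ is an ideal-membership manipulation, not a leading-term reduction; and if you actually reduce $S_{ij}y^n$ against the lifted reducers $\hat g_l=g_l-b_ly^n$, every step reintroduces a tail involving $b_ly^{2n}$, so you get no visible termination and no control of leading monomials --- exactly the failure mode you warn against. The paper's resolution is different and is the one place where the hypotheses on $I$ and $g$ enter: writing the final residue as $-a_{m+1}y^n$ with vanishing $y$-free part, one gets $a_{m+1}g=-\sum_i a_if_i\in I$, hence $a_{m+1}\in I$ because $I$ is prime and $g\notin I$ (equivalently, $g$ is $K[x]/I$-regular). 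Then $a_{m+1}$ reduces to $0$ against $F=\{f_1,\dots,f_m\}$, the standard basis of $I$; since the $f_i$ lift to themselves (they carry no $y^n$-correction), multiplying that reduction by $y^n$ gives a genuine $s$-normal-form sequence inside $\bG_k$ ending at $0$, with no new tails. Your proposal never uses the primality of $I$, which is a sign it cannot be completed as written. As for the uniform order bound you leave open: for the reductions actually recorded by the algorithm (those producing a new basis element $h^{(i)}\in G_k$) the bound holds because $\ord$ is non-decreasing along $s$-polynomial reduction and the terminal element has order at most $N$; for the $s$-polynomials reducing to zero, the paper does not need such a bound on the last residue --- it disposes of it by the $a_{m+1}\in I$ argument above.
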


\begin{proof}
Assume that $n>N$.
By \proref{p:In}, it is sufficient to prove that the ideal $I_n$ has a standard basis $\cG$ such that $\defset{\In(f)}{f\in \cG}= \{\In(g_1), \dots, \In(g_t)\}$.

Note that all polynomials in $G_k$ and those appearing in the algorithm for obtaining $G_k$ from $G_0$ are represented as linear combinations of polynomials in  $G_0$ with coefficients in $K[x]$. 
For each $f$ of those polynomials, we fix such a representation $f=\sum_{i=1}^m a_if_i +a_{m+1}g$, and define $\baf\in K[x,y]$ as the polynomial obtained from $f$ by replacing $g$ with $g-y^n$ in the representation; namely, $\baf= \sum_{i=1}^m a_if_i +a_{m+1}(g-y^n)$.
Let us express these procedures more precisely.
For $0 \le i <k$, we may assume that $h^{(i)}\in G_{i+1}\setminus G_i$ is a unique element and $h^{(i)}=\nf(\spo(p_1^{(i)},p_2^{(i)}) \mid G_i)$ with $p_1^{(i)}, p_2^{(i)}\in G_i$.
Let $\{h_0^{(i)}=\spo(p_1^{(i)},p_2^{(i)}), h_1^{(i)}, \dots, h_{\ell_i}^{(i)}=h^{(i)}\}$ be a sequence obtained by the algorithm in \proref{p:mora}. 
Let 
\[
\cF=\left( \bigcup_{i=0}^k G_i \right) 
\cup \left( \bigcup_{i=0}^{k-1} \{h_0^{(i)}, h_1^{(i)}, \dots, h_{\ell_i}^{(i)}\} \right)
\]
and let $K[x][s]:=K[x][s_1, \dots, s_{m+1}]$ be the polynomial ring in $m+1$ variables with coefficient ring $K[x]$.
We fix a map $\Phi\: \cF \to K[x][s]$ such that $\Phi(f)=\Phi(f)(s_1, \dots, s_{m+1})$ is of the form 
$\sum_{i=1}^{m+1} a_{i}(f)s_i$, where $a_i(f)\in K[x]$,  and that 
\[
f=\Phi(f)(f_1, \dots, f_m,g)=\sum_{i=1}^m a_i(f)f_i +a_{m+1}(f)g \in K[x].
\]
Moreover assume that $\Phi(f_i)=s_i$ for $i=1, \dots, m$, 
and $\Phi(g)=s_{m+1}$.
For each  $f\in \cF$, we define a polynomial $\ol{f}$ by
\[
\ol{f}=\Phi(f)(f_1, \dots, f_m,g-y^n)\in K[x,y].
\]
Let $\bG_i = \defset{\baf}{f\in G_i}\subset K[x,y]$.
Then $I_n=\gen{\bG_0}$.

We consider the following condition for $0\le j \le k$:
\begin{center}
$C(j)$: 
$\In (f) \in K[x]$ for any $f\in \bG_i$ for $0 \le i \le j$.
\end{center}
We will prove that the condition $C(k)$ holds.
Consequently, we obtain 
\[
\defset{\In(f)}{f\in \bG_k} = \defset{\In(f)}{f\in G_k}
=\{\In(g_1), \dots, \In(g_t)\}.
\]
Assume that $C(i)$ with $i<k$ holds.
Let $p_1=p_1^{(i)}, p_2=p_2^{(i)}\in G_i$, $\ell=\ell_i$  and $h_j=h_j^{(i)}$ ($j=1, \dots, \ell$).
For any $f(x,y)\in K[x,y]$, we write $f(x,0)=f|_{y=0}$.
Note that $\ord(f)\le N<n$ for each $f\in \cF$. 
Since $\lm(\bar p_1)=\lm(p_1)$ and $\lm(\bar p_2)=\lm(p_2)$ by the assumption, we have $\lm(\spo(\bar p_1, \bar p_2))=\lm(\spo(p_1, p_2))$ and $\spo(\bar p_1, \bar p_2)|_{y=0}=\spo(p_1, p_2)$.
Suppose that $\In(\bh_t)\not\in K[x]$ for some $0\le t\le \ell$ and $t$ is the minimum among such numbers, and let $\Phi(h_t)=\sum_{i=1}^{m+1} a_is_i$. 
Then we have the following representation:  
\[
\bh_t=\sum_{i=1}^m a_if_i +a_{m+1}(g-y^n)=h_t-a_{m+1}y^n.
\]
Since $\ord(h_t) < n$, we have $h_t=0$;  however, it contradicts that $h_j\ne 0$ for every $0\le j \le \ell$ as $h_{\ell}\ne 0$.
Therefore, $\In(\bh_j)\in K[x]$ for $0\le j\le \ell$, and thus $C(i+1)$ holds.
Hence the condition $C(k)$ also holds.

Now, we have to prove that $\bG_k$ is a standard basis of $I_n$. 
To this end, it suffices to show that $\snf(\spo(f,f') \mid \bG_k)=0$ for any $f, f' \in \bG_k$ by \proref{p:snfstd}.
Let $\bp_1, \bp_2 \in \bG_k$ with $p_1, p_2\in G_k$.
We may assume that $\spo(\bar p_1, \bar p_2)\ne 0$.
Let us use the notation $h_j$, $\bh_j$, $0\le j \le \ell$ as above (we extend $\Phi$ to a function on $\cF \cup\{h_0, \dots, h_{\ell}\}$).
Since $G_k$ is a standard basis of $I+\gen{g}$, it follows from \thmref{t:Buchberger} that $h_{\ell}=\nf(\spo(p_1, p_2) \mid G_k)=0$.
By the argument above, we have that $\In(\bh_{j})\in K[x]$ for $0\le j <\ell$. 
Since  the sequence $\{h_0, \dots, h_{\ell}\}$ is obtained by taking $s$-polynomials and $\lm(h_i)=\lm(\bh_i)$ for $0\le j <\ell$, the sequence $\{\bh_0, \dots, \bh_{\ell-1}\}$ satisfies the property in \defref{d:snf}; however, $\bh_{\ell-1}$ might not be an $s$-normal form.
Assume that $h_{\ell}=\spo(h_{\ell-1},h')$ for an element $h'\in G_k\cup\defset{h_i}{i<\ell-1}$, and put $h=\spo(\bh_{\ell-1}, \bh')$.
Let $h=\sum_{i=1}^m a_if_i +a_{m+1}(g-y^n)$, $a_i \in K[x]$,  be a representation obtained naturally from $\bh_{\ell-1}$ and $\bh'$.
Then $h|_{y=0}=h_{\ell}=0$, and thus $h=-a_{m+1}g=\sum_{i=1}^m a_if_i \in \gen{f_1, \dots, f_m}=I$.
Hence $a_{m+1}\in I$, because $I$ is a prime ideal and $g\not\in I$.
Recall that $F:=\{f_1, \dots, f_m\}$ is a standard basis of $I$.  
By \thmref{t:Buchberger}, we have $\nf(a_{m+1}\mid F)=0$.
Since $F \subset  \bG_k$ by the assumption on $\Phi$, it follows that $\snf(a_{m+1} \mid \bG_k)=0$.
By multiplying $y^n$, we obtain an $s$-normal form of $a_{m+1}y^n=-h$, that is, $\snf(h \mid \bG_k)=0$.
Finally, we obtain a sequence as in \defref{d:snf} starting from $\bh_0=\spo(\bar p_1, \bar p_2)$ via $h$, and ending at $0$. That is, $\snf(\spo(\bar p_1, \bar p_2) \mid \bG_k)=0$.
\end{proof}

\begin{rem}
In the proof above, the assumption that $I$ is a prime ideal is only applied to  show that $a_{m+1}g\in I$ implies $a_{m+1}\in I$. 
Therefore, this assumption can be replaced with a weaker condition.
In fact, it suffices to assume that $g$ is  $K[x]/I$-regular, that is,  if $a\in K[x]$ and $ag\in I$, then $a\in I$.
\end{rem}

\begin{ex}
We show some simple examples with $r=2$.
We use the variables $(x,y,z)$ instead of $(x_1, x_2, y)$ and the monomial ordering $\ds$.

(1) 
 Let $I=\gen{xy}\subset K[x,y]$ and $g=x^{\al}$, where $\al$ is a positive integer.
Then $g$ is not $K[x,y]/I$-regular.
Assume that $n> \al \ge 2$.
Then  $\{xy, x^{\al}-z^n, yz^n\}$ is a standard basis of $I_n=\gen{xy, x^{\al}-z^n}$ and $\In (I_n) = \gen{xy, x^{\al}, yz^n}$.
From an exact sequence
\[
0 \to \frac{\In (I_n) +\gen{x}}{\In (I_n)} \to  \frac{K[x,y,z]}{\In (I_n)} \to 
 \frac{K[x,y,z]}{\In (I_n)+\gen{x}} \to 0
\]
and an isomorphism $K[x,y,z]/\gen{y,x^{\al-1}} \cong (\In (I_n) +\gen{x})/\In (I_n)$, we have $\mult(B_n)=\al+n$.

(2) Let $I=\gen{x^3+y^4}$ and $g=x^3$.
Then $\{x^3,y^4\}$ is a standard basis of $I+\gen{g}$. So $N=4$.
Then we have the following:
\renewcommand{\arraystretch}{1.2}
\[
\begin{array}{c|ccccc}
\hline\hline
n & 2 & 3 & 4 & 5 & 6 \\
\hline
\In (I_n) & \gen{z^2,x^3} & \gen{x^3+z^3, z^3} & \gen{x^3,y^4-z^4} & \gen{x^3,y^4} & \gen{x^3,y^4} \\
\hline
\mult(B_n) & 6 & 9 & 12 & 12 & 12  \\
\hline\hline
\end{array}
\]

\end{ex}

\begin{rem}\label{r:TC}
In the situation of \thmref{t:main}, we have 
\[
K[x,y]/\gen{\In(g_1), \dots, \In(g_t)}\cong (K[x]/\gen{\In(g_1), \dots, \In(g_t)})[y].
\]
Hence the affine tangent cone of $B_n$ with $n>N$  is isomorphic to the product of the affine tangent cone of $R/(I+\gen{g})R$ and the affine line $\A^1_K$.
\end{rem}

\begin{rem}\label{r:pg}
If $A$ is normal and the image of $g$ in $A$ is reduced, then $B_n$ are normal for every $n\in \Z_{>0}$ (\cite{tw.Zr-cover}).
Let us consider the case that $\spec B_n$ are normal two-dimensional singularities. Then, as $n$ approaches infinity, the geometric genus $p_g(\spec B_n)$ tends towards infinity (cf. \cite[\S 2]{AshiCyc}).
Therefore, the tangent cone  of a normal two-dimensional singularity cannot provide upper bounds of the geometric genus, even though any singularity is a small deformation of its tangent cone (cf. \cite[\S 5]{tki-w}).
\end{rem}


\providecommand{\bysame}{\leavevmode\hbox to3em{\hrulefill}\thinspace}
\providecommand{\MR}{\relax\ifhmode\unskip\space\fi MR }
\providecommand{\MRhref}[2]{%
  \href{http://www.ams.org/mathscinet-getitem?mr=#1}{#2}
}
\providecommand{\href}[2]{#2}

\end{document}